\newtheorem{theorem}{Theorem}
\newtheorem{proposition}{Proposition}
\newtheorem{lemma}{Lemma}
\newtheorem*{fact}{Fact}
\theoremstyle{remark}
\newtheorem*{observation*}{Observation}
\newtheorem*{claim}{Claim}
\newcommand*{\contract}{\star}
\newcommand*{\transp}{\mathsf{T}}
\newcommand*{\N}{\mathbb{Z}_{+}}
\newcommand*{\Z}{\mathbb{Z}}
\newcommand*{\R}{\mathbb{R}}
\newcommand*{\Rplus}{\mathbb{R}_{+}}
\newcommand*{\Q}{\mathbb{Q}}
\newcommand*{\supp}{\mathrm{supp}}
\newcommand*{\cl}{\mathrm{cl}}
\newcommand*{\join}{\vee}
\newcommand*{\meet}{\wedge}
\newcommand*{\one}{\mathbf{1}}
\title{An algorithm for weighted fractional matroid matching}
\author{Dion Gijswijt \thanks{CWI and Department of Mathematics, Leiden University. E-mail: {\tt dion.gijswijt@gmail.com}} \and Gyula Pap  \thanks{MTA-ELTE Egerv\'ary Research Group on Combinatorial Optimization, Department of Operations Research, E\"otv\"os Lor\'and University, P\'azm\'any P\'eter s\'et\'any, 1/C,
1117 Budapest, Hungary. E-mail: \texttt{gyuszko@cs.elte.hu}. Supported by the Hungarian National Foundation for Scientific Research (OTKA) grant CK80124.}}
\begin{document}

\maketitle

\begin{abstract}
Let $M$ be a matroid on ground set $E$ with rank function $r$. A subset $l\subseteq E$ is called a \emph{line} when $r(l)\in \{1,2\}$. Given a finite set $L$ of lines in $M$, a vector $x\in \Rplus^L$ is called a \emph{fractional matching} when $\sum_{l\in L} x_l a(F)_l\leq r(F)$ for every flat $F$ of $M$. Here $a(F)_l$ is equal to $0$ when $l\cap F=\emptyset$, equal to $2$ when $l\subseteq F$ and equal to $1$ otherwise. We refer to $\sum_{l\in L}x_l$ as the \emph{size} of $x$.

It was shown by Chang et al. (\textit{Discrete Math.} 237 (2001), 29--61.), that a maximum size fractional matching can be found in polynomial time. In this paper we give a polynomial time algorithm to find, for given weight function $w:L\to \Q$, a \emph{maximum weight} fractional matching. 
A simple reference to the equivalence of separation and optimization does not lead to such an algorithm, since no direct method for polynomial time separation is known for this polytope. 
\newline
\newline
\textbf{Keywords:} matroid, matroid matching, matroid parity, fractional matching, lattice polytope, algorithm.
\end{abstract}

\section{Introduction}
Let $M$ be a matroid on the set $E$, and let $\{a_1,b_1\},\ldots,\{a_k,b_k\}$ be pairs of elements from $E$. A subset $I$ of $\{1,\ldots,k\}$ is called a \emph{matching} if $\bigcup_{i\in I} \{a_i,b_i\}$ is an independent set of size $2|I|$ in the matroid $M$. The problem of finding a maximum size matching is called the \emph{matroid matching problem} and was proposed by Lawler \cite{Lawler} as a common generalization of non-bipartite matching and matroid intersection. By adding parallel copies of elements in the matroid, it may be assumed that the pairs are disjoint. In this form the matroid matching problem is often called the \emph{matroid parity problem}.

Lov\'asz \cite{Lovasz} and, independently, Korte and Jensen \cite{KorteJensen} showed that for general matroids given by an independence oracle, the matroid matching problem requires an exponential number of oracle calls, and hence is not solvable in polynomial time. Also, the problem of finding a maximum size clique in a graph can be formulated as a matroid matching problem, where the oracle is removed and independence can be read off from the input graph directly, showing that the matroid matching problem contains NP-hard problems (see \cite{Lex}).

However, matroid matching has become a powerful tool in combinatorial optimization since Lov\'asz \cite{Lovasz} proved a min-max formula and constructed a polynomial time algorithm for matroid matching in representable matroids that are given by an explicit linear representation over a field. Already linear matroid matching has a wide range of applications, among which are a polynomial time algorithm for packing Mader paths \cite{Lex}, graph rigidity \cite{Lovasz2} and computing the maximum genus surface in which an input graph can be embedded with all faces isomorphic to the disk \cite{genus}.
More efficient algorithms for linear matroid matching have been developed by Gabow and Stallmann \cite{GabowStallmann} and by Orlin and Vande Vate \cite{OrlinVandeVate}.     

An outstanding problem concerning matroid matching is to construct an efficient algorithm for finding a \emph{maximum weight} matching for linearly represented matroids\footnote{At time of publication we note that such an algorithm has recently been found by Satoru Iwata and independently by Gyula Pap. By comparison, the present paper is not limited to linearly representable matroids, but rather considers a fractional relaxation of the problem for general matroids.}.  

To gain better understanding of the matroid matching polytope, Vande Vate \cite{FMM} introduced a fractional relaxation of this polytope, called the \emph{fractional matroid matching polytope}. The integer points of this polytope correspond exactly to the matroid matchings, and although the polytope itself is not integer, the vertices are half-integer. Furthermore, in the two extremes where the matroid matching problem is in fact a matroid intersection problem or a non-bipartite matching problem, the fractional matroid matching polytope coincides with the common independent set polytope and with the fractional matching polytope respectively.

In a series of two papers, Chang, Llewellyn and Vande Vate \cite{FMM-algorithm, FMM-duality} showed that there exists a polynomial time algorithm to optimize the all-one objective function over the fractional matroid matching polytope. 

In the present paper, we consider optimizing arbitrary weight functions over the fractional matroid matching polytope. Extending the result from \cite{FMM}, we show that not only is the polytope half-integer, the system defining the polytope is in fact totally dual half-integral. Our main result is the construction of a polynomial time algorithm for optimizing arbitrary weight functions over the fractional matroid matching polytope, showing that the polytope is algorithmically tractable. 
Note that this result does not follow by reference to the equivalence of separation and optimization, since separation of the polytope is not straightforward. In fact, no direct method for polynomial time separation is known for this polytope. 

%*Lets leave out all the matroid matching approximation until we understand it properly
%
%As an application we give a $2/3$-approximation algorithm for weighted linear matroid matching.
%
%

\section{Preliminaries}
%Preliminaries on matroids and flats
Let $M$ be a matroid with ground set $E$ and rank function $r_M$. Here we will assume that $M$ does not have loops. The set $E$ need not be finite, but we do require that $M$ has finite rank. The span of $X\subseteq E$ is denoted by $\cl_M (X)$, the smallest flat containing $X$. The set $\mathcal{L}(M)$ of flats of the matroid $M$ is a lattice under inclusion, with join $S\join T:=\cl_M (S\cup T)$ and meet $S\meet T:=S\cap T$. The rank function $r_M$ is submodular on $\mathcal{L}(M)$:
\begin{equation}r_M(S)+r_M(T)\geq r_M(S\join T)+r_M(S\meet T)\quad \text{for all $S,T\in \mathcal{L}(M)$.}
\end{equation}
When the matroid is clear from the context, we will suppress the matroid in the notation. For further notation and preliminaries on matroids, we refer the reader to \cite{Oxley}. 

%Degree vector, supermodularity
Let $L$ be a finite set of \emph{lines} of $M$, where a \emph{line} is a subset of $E$ of rank $1$ or $2$. We do not require the lines to be flats. For any subset $X\subseteq E$, we define its \emph{degree vector} $a(X)\in \{0,1,2\}^L$ by 
\begin{equation}\label{Degreevector}
a(X)_l:=
\begin{cases}
 0 & \text{if $X\cap l=\emptyset$,}\\
 2 & \text{if $X\supseteq l$,}\\
 1 & \text{otherwise.}
\end{cases}
\end{equation}

The following important fact will be used throughout the text.
\begin{fact}
For every line $l\in L$ the function $a(\cdot)_l:\mathcal{L}(M)\to \{0,1,2\}$ is \emph{supermodular}: 
\begin{equation}
a(S)_l+a(T)_l\leq a(S\join T)_l+a(S\meet T)_l\quad\text{for all $l\in L$ and all $S,T\in\mathcal{L}(M)$.}
\end{equation}
\end{fact}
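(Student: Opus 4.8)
The plan is to fix an arbitrary line $l \in L$, abbreviate $f := a(\cdot)_l \colon \mathcal{L}(M) \to \{0,1,2\}$, and verify $f(S) + f(T) \le f(S \join T) + f(S \meet T)$ by a short case analysis on the values $f(S), f(T)$. The first, routine, observation I would record is that $f$ is monotone for inclusion: since $f(X) = 0$ exactly when $X \cap l = \emptyset$ and $f(X) = 2$ exactly when $l \subseteq X$, any $X \subseteq Y$ satisfies $f(X) \le f(Y)$. As $S \meet T = S \cap T$ sits below both $S$ and $T$, and $S, T$ both sit below $S \join T = \cl(S \cup T)$, monotonicity gives $f(S \meet T) \le \min\{f(S), f(T)\}$ and $f(S \join T) \ge \max\{f(S), f(T)\}$, which already handles several configurations.

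Next I would dispatch all but one case. If $\min\{f(S), f(T)\} = 0$ then $f(S \meet T) = 0$ and $f(S \join T) \ge \max\{f(S), f(T)\} = f(S) + f(T)$, so we are done. If one value, say $f(T)$, equals $2$ --- i.e.\ $l \subseteq T$ --- then $l \subseteq S \join T$ forces $f(S \join T) = 2$, while $l \cap (S \cap T) = l \cap S$ forces $f(S \meet T) = f(S)$, so $f(S \join T) + f(S \meet T) = 2 + f(S) \ge f(T) + f(S)$. The only remaining case is $f(S) = f(T) = 1$; here if $f(S \meet T) \ge 1$ then $f(S \join T) \ge 1$ as well and the sum is at least $2$. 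So everything reduces to the single claim: if $S$ and $T$ each meet $l$ but $S \cap T$ does not, then $l \subseteq S \join T$.

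To prove this claim I would pick witnesses $e_S \in l \cap S$ and $e_T \in l \cap T$; since $l \cap S \cap T = \emptyset$ we get $e_S \notin T$, $e_T \notin S$, and in particular $e_S \ne e_T$. If $r(\{e_S, e_T\}) = 1$ then $e_S, e_T$ are parallel, so $e_T \in \cl(\{e_T\}) = \cl(\{e_S\}) \subseteq S$ (as $S$ is a flat containing $e_S$), contradicting $e_T \notin S$; hence $r(\{e_S, e_T\}) = 2$. Since $\{e_S, e_T\} \subseteq l$ and $r(l) \le 2$, this yields $\cl(\{e_S, e_T\}) = \cl(l) \supseteq l$, and since $e_S, e_T \in S \cup T$ we get $\cl(\{e_S, e_T\}) \subseteq \cl(S \cup T) = S \join T$, so $l \subseteq S \join T$. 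I expect this last step --- producing a basis of $\cl(l)$ from the two witness points, which is exactly where the hypotheses $r(l) \le 2$ and ``$S, T$ are flats'' are used --- to be the only substantive point; the rest is bookkeeping over the three possible values of $f$.
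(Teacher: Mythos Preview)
Your proof is correct and follows essentially the same case analysis as the paper's own argument: both reduce to the situation $a(S)_l=a(T)_l=1$ and then split on whether $l$ meets $S\cap T$. Your treatment of the final subcase is actually more explicit than the paper's---you spell out why the two witness points $e_S,e_T$ must be non-parallel (using that $S,T$ are flats) and hence span $\cl(l)$, whereas the paper simply asserts $l\subseteq S\vee T$ without this justification.
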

\begin{proof}
We may assume without loss of generality that $a(S)_l\leq a(T)_l$. Then there are three cases to consider.
\begin{itemize}
\item If $a(S)_l=0$, the inequality follows since $a(T)_l\leq a(S\vee T)_l$.
\item If $a(T)_l=2$, then $l\subseteq T$ which implies that $l\cap S=l\cap (S\wedge T)$ and hence $a(S)_l=a(S\wedge T)_l$. The inequality now follows since $a(T)_l\leq a(S\vee T)_l$.
\item If $a(S)_l=a(T)_l=1$, then either $l\cap (S\cap T)\neq \emptyset$ or $l\cap (S\setminus T), l\cap (T\setminus S)\neq \emptyset$.\\
In the first case we have  $a(S\wedge T)_l\geq 1$ and hence also $a(S\vee T)_l\geq 1$.\\
In the second case we find that $l\subseteq S\vee T$ and hence $a(S\vee T)_l=2$.\\In both cases the inequality follows directly.
\end{itemize}
\end{proof}

%Fractional matchings, FM polytope
A vector $x\in \R^L$ is called a \emph{fractional matching in $(M,L)$} when 
\begin{eqnarray}\label{FMpolytope}
x&\geq &0,\nonumber\\
a(T)\cdot x&\leq&r(T) \text{ for every $T\in\mathcal{L}(M)$}.
\end{eqnarray}
Observe that for any line $l\in L$ we have $x_l\leq 1$ if $r(l)=2$ and $x_l\leq \tfrac{1}{2}$ if $r(l)=1$ by taking $T=\cl(l)$.
The set of fractional matchings in $(M,L)$ is called the \emph{fractional matching polytope}, denoted by $P_{M,L}$.  Although the number of flats may be infinite, the number of distinct degree-vectors $a(T)$ is finite so that $P_{M,L}$ is indeed a polytope.

We refer to $|x|:=\sum_{l\in L}x_l$ as the \emph{size} of $x$. The maximum size of a fractional matching in $(M,L)$ is denoted by $\nu^*(M,L)$. The size of a fractional matching $x$ is at most $\frac{1}{2}r(E)$, and when equality holds, $x$ is called a \emph{perfect} fractional matching.

A simple but very useful observation is the following.  
\begin{equation}
\text{For $x\in P_{M,L}$, $\{T\in \mathcal{L}(M)\mid a(T)\cdot x=r(T)\}$ is a sublattice of $\mathcal{L}(M)$}.
\end{equation}
This follows directly from supermodularity of the $a_l(\cdot)$ and submodularity of the rank function (see \cite{FMM-duality}).  

In what follows, it will be convenient to work with finite formal sums of flats, that is, expressions of the form $y=\sum_{F\in \mathcal{L}(M)} \lambda_F F$, where $\lambda\in \R^{\mathcal{L}(M)}$ and $\lambda_F=0$ for all but a finite number of flats $F$. We denote $y_F:=\lambda_F$ and $\supp(y):=\{F\mid y_F\not=0\}$. We extend all functions on $\mathcal{L}(M)$ linearly to finite formal sums and hence write $r(y):=\sum_F \lambda_F r(F)$ and $a(y):=\sum_F \lambda_F a(F)$.

%The half-TDI-ness result
Consider the linear program of optimizing a nonnegative weight function $w:L\to\N$ over the fractional matching polytope:

\begin{eqnarray}\label{primal}
\text{maximize}&&w\cdot x\nonumber\\
\text{subject to}&&x\geq 0\nonumber\\
&&a(T)\cdot x\leq r(T)\quad \text{for all $T\in\mathcal{L}(M)$}.
\end{eqnarray}

The dual linear program is given over vectors $y\in\R ^{\mathcal{L}(M)}$ of finite support\footnote{For each of the finitely many degree vectors, we may restrict to at most one representing flat $F$.} by
\begin{eqnarray}\label{dual}
\text{minimize}&& r(y)\nonumber\\
\text{subject to}&&y\geq 0\nonumber\\
&&a(y)\geq w.
\end{eqnarray}

It was shown in \cite{FMM} that the fractional matching polytope is half-integer and that for $w=\one$, there is a half-integer optimal dual solution. Here we show that the system (\ref{primal}) is totally dual half integral. That is, for every $w:L\to \N$, the dual has a half-integer optimal dual solution. This implies half-integrality of the fractional matching polytope (see \cite{Linearprogramming}).

\begin{theorem}
System (\ref{primal}), describing the fractional matching polytope, is totally dual half-integral.
\end{theorem}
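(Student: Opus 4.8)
The plan is to show that for every $w\colon L\to\N$ the dual program~(\ref{dual}) has a half-integer optimal solution. The dual is feasible --- since $l\subseteq E$ for each line, $a(E)=2\cdot\one\ge w$, so $y=\roundup{\tfrac12\max_{l}w_l}\cdot E$ is feasible --- and bounded below by $0$; restricting, as in the footnote, to one minimum-rank representative flat per degree vector turns~(\ref{dual}) into a finite rational linear program, so an optimal solution exists, of value $\beta$ say. The argument then has three stages: (i) pass to an optimal dual solution whose support is a chain of flats; (ii) rewrite the restriction of~(\ref{dual}) to such a chain, via a telescoping change of variables, as an explicit small linear program; (iii) observe that this program has at most two nonzeros per row and integral right-hand side, which forces half-integrality.

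For stage~(i) I would use the standard uncrossing move. If $y$ is optimal and $S,T\in\supp(y)$ are incomparable flats, decrease $y_S$ and $y_T$ by $\varepsilon:=\min\{y_S,y_T\}>0$ and increase $y_{S\join T}$ and $y_{S\meet T}$ by $\varepsilon$ (discarding any resulting term on the empty flat, which is harmless since $a(\emptyset)=\Zero$ and $r(\emptyset)=0$). By the Fact each $a(\cdot)_l$ is supermodular, so no coordinate of $a(y)$ decreases and feasibility is kept; by submodularity of $r$ the value $r(y)$ cannot increase, so the new vector is again optimal, and since $\varepsilon=\min\{y_S,y_T\}$ one of $S,T$ leaves the support. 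Iterating --- with the usual potential argument, e.g.\ choosing among all optimal solutions one maximizing $\sum_{F}y_F\,r(F)^2$: optimality of $y$ forces $r(S)+r(T)=r(S\join T)+r(S\meet T)$, whence this move changes the potential by $2\varepsilon\bigl(r(S)-r(S\meet T)\bigr)\bigl(r(T)-r(S\meet T)\bigr)$, which is strictly positive because incomparability of the flats $S,T$ gives $r(S\meet T)<r(S)$ and $r(S\meet T)<r(T)$ --- yields an optimal solution supported on a chain $F_1\subsetneq F_2\subsetneq\cdots\subsetneq F_k$. Making this terminate when $\mathcal{L}(M)$ is infinite is the point that needs the most care, but it is routine (one may first pass to the finite sublattice generated by the current support).

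For stage~(ii), along the chain $a(F_i)_l$ is nondecreasing in $i$ (if $F\subseteq G$ are flats then $a(F)_l\le a(G)_l$ straight from~(\ref{Degreevector})), so for each line $l$ there are thresholds $i_l\le j_l$ in $\{1,\dots,k,\infty\}$ with $a(F_i)_l$ equal to $0$ for $i<i_l$, to $1$ for $i_l\le i<j_l$, and to $2$ for $i\ge j_l$. Setting $s_i:=\sum_{t\ge i}y_{F_t}$ for $1\le i\le k$ and $s_{k+1}:=0$ gives $y_{F_i}=s_i-s_{i+1}$, so $y\ge 0$ becomes $s_1\ge s_2\ge\cdots\ge s_k\ge 0$, the line constraint $a(y)_l\ge w_l$ becomes $s_{i_l}+s_{j_l}\ge w_l$ (with $s_\infty:=0$; lines with $w_l=0$ are dropped), and $r(y)=\sum_{i=1}^{k}c_i s_i$ with $c_i:=r(F_i)-r(F_{i-1})\ge 1$ and $r(F_0):=0$. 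Thus the restriction of~(\ref{dual}) to the chain is: minimize $\sum_i c_i s_i$ over $s_1\ge\cdots\ge s_k\ge 0$ with $s_{i_l}+s_{j_l}\ge w_l$ for all relevant $l$; its optimum equals $\beta$ and, the feasible region being contained in the nonnegative orthant and hence pointed, is attained at a vertex.

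For stage~(iii), each of these constraints involves at most two of $s_1,\dots,s_k$: the ordering constraints with coefficients $+1,-1$; the constraint $s_k\ge 0$ with a single $+1$; and a line with a pair of $+1$'s (if $i_l<j_l<\infty$), a single $+1$ (if $j_l=\infty$), or a single $+2$ (if $i_l=j_l$). The right-hand sides are integers. I claim every vertex of a polyhedron $\{s:\widetilde A s\ge b\}$ with $b$ integral and $\widetilde A$ of this shape is half-integral: at a vertex the tight rows form a nonsingular square system; regarding the columns as vertices of a graph with one edge per tight two-variable row, each row lies in a single connected component, so the system is block diagonal over the components, and a component on $c$ columns uses exactly $c$ rows --- a spanning tree of two-variable rows plus one further row. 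Expanding, the tree contributes $\pm1$ and the extra row a factor in $\{0,\pm1,\pm2\}$, so each (nonsingular) block has determinant $\pm1$ or $\pm2$, and by Cramer's rule applied componentwise every $s_i$ lies in $\tfrac12\Z$. Taking an optimal vertex $s^{\ast}$ and setting $y^{\ast}_{F_i}:=s^{\ast}_i-s^{\ast}_{i+1}$ produces a half-integer optimal solution of~(\ref{dual}); since $w\colon L\to\N$ was arbitrary, (\ref{primal}) is totally dual half-integral. The one subtlety in stage~(iii) is that the single-$+2$ rows genuinely cost a factor of $2$ (this is why we get half-integrality rather than integrality), but no more; everything else is bookkeeping.
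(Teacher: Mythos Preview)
Your proof is correct and follows essentially the same strategy as the paper: uncross to a chain using the potential $\sum_F y_F\,r(F)^2$, then exploit that along a chain the differences $a(F_i)_l-a(F_{i-1})_l$ give a system with at most two nonzeros per row/column summing to at most $2$, and deduce half-integrality from that sparse structure. The only cosmetic difference is that you implement the ``differencing'' by the change of variables $s_i=\sum_{t\ge i}y_{F_t}$ and then argue via graph components and tree-plus-one-row determinants (cofactor expansion along the extra row against the $\pm1$ tree minors), whereas the paper left-multiplies the submatrix $A'$ by the bidiagonal matrix $U$ and proves the resulting claim about $\{0,1,2\}$-matrices with column sums $\le 2$ by a short induction; these are two standard proofs of the same half-integrality fact.
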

\begin{proof}
We show that the dual linear program (\ref{dual}) has a half-integer solution. Let $y$ be an optimal solution for which $f(y):=\sum_{F\in \mathcal{L}(M)} y_F r(F)^2$ is maximal. Such a $y$ exists, because we may restrict to a finite set of flats, one representative for each possible degree-vector, and because the function $f(y)$ is bounded from above by $r(E)\cdot r(y)$. 
 
The support $\mathcal{L}^+:=\supp (y)$ of $y$ is a chain. Indeed, suppose that $S,T\in\mathcal{L}^+$ with $r(T)\geq r(S)$ and $S\not\subseteq T$. Let $\epsilon:=\min \{y_S,y_T\}>0$, and define $y':=y+\epsilon\,(S\meet T+S\join T-S-T)$. Then $y'$ is a feasible solution to (\ref{dual}) by supermodularity of the $a(\cdot)_l$, $l\in L$. By the submodularity of $r$, $y'$ will be an optimal dual solution and $d:=r(S\join T)-r(T)=r(S)-r(S\meet T)>0$. It then follows that $f(y')-f(y)=2\epsilon d\cdot(r(T)-r(S)+d)\geq 2\epsilon d\cdot d>0$, contradicting the choice of $y$.

Denote $\mathcal{L}^+=\{T_1,\ldots, T_k\}$ where $T_1\subset \cdots\subset T_k$. Let $A\in \{0,1,2\}^{\{1,2,\ldots,k\}\times L}$ given by $A_{i,l}:=a(T_i)_l$, be the matrix with the degree vectors of the $T_i$ as rows. To prove that there exists a half-integral optimum $y$, it suffices to show that every non-singular square submatrix $A'$ of $A$ has a half-integer inverse. Since $A'^{-1}=(UA')^{-1}U$, where 
$$U=
\begin{bmatrix}
1&0&\cdots&\cdots&0\\
-1&\ddots&\ddots&&\vdots\\
0&\ddots&\ddots&\ddots&\vdots\\
\vdots&\ddots&\ddots&\ddots&0\\
0&\hdots&0&-1&1
\end{bmatrix},$$
it suffices to show that $D:=UA'$ has a half-integer inverse. This follows from the following claim.
\begin{claim}
Let $D\in \{0,1,2\}^{n\times n}$ be a nonsingular matrix with column sums at most $2$, and let $b\in \Z^n$. Then the unique solution $x$ of $Dx=b$ is half-integer.  
\end{claim}

%Old proof
%\begin{proof}[Proof of claim.]
%The proof is by induction on $n$. If some entry $x_i$ of $x$ is integer, then by deleting column $i$ and some row $j$ of $D$, we obtain a nonsingular matrix $D'$, and we can apply induction. Hence we may assume that the coefficients of $x$ are all non-integer. This implies that all column sums of $D$ are equal to $2$. If some entry $D_{i,j}$ of $D$ equals $2$, we can apply induction by deleting row $i$ and column $j$ of $D$. Hence we may assume that all columns of $D$ contain two entries that equal $1$. Thus $D$ is the vertex-edge incidence matrix of a graph $G=(V,E)$, and $x$ is the unique perfect fractional $b$-matching in $G$. If $d_G(v)=1$ for some $v\in V$, say $vz\in E$, then $x(vz)=b(v)\in \Z$, contradicting our assumption. If all degrees are at least $2$, then since $|V|=|E|$, $G$ is the union of (odd) cycles. The half-integrality of $x$ is then obvious. \end{proof}
%\end{proof}

%Updated proof
\begin{proof}[Proof of claim.]
The proof is by induction on $n$, the case $n=1$ being clear. First suppose that some row $i$ of $D$ has sum $1$. By exchanging rows and columns, we may assume that our system has the form
$$\begin{bmatrix}1&0\\d'&D'\end{bmatrix}\,\begin{bmatrix}x_1\\x'\end{bmatrix}=\begin{bmatrix}b_1\\b'\end{bmatrix}.$$
Hence $x_1=b_1$ and $[x_2,\ldots, x_n]^\transp$ is the unique solution to $D'x'=b'-b_1d'\in \mathbb{Z}^{n-1}$, which by induction has a half-integer solution.

So we may assume that all row-sums of $D$ are at least 2. Since $D$ is square and all column-sums are at most 2, a simple counting argument shows that all row- and column-sums are equal to 2.

If $D_{ij}=2$ for some $i$ and $j$, we may assume that $i=j=1$ and obtain the system
$$
\begin{bmatrix}2&0\\0&D'\end{bmatrix}\begin{bmatrix}x_1&x'\end{bmatrix}=\begin{bmatrix}b_1\\b'\end{bmatrix}.
$$
We see that $x_1=\tfrac{1}{2}b_1$ and $x'$ is half-integer by induction.

So we may assume that $D$ is a $0$--$1$ matrix and that all columns and rows of $D$ contain exactly two entries that equal $1$. Thus $D$ is the vertex-edge incidence matrix of a graph $G$ that is the disjoint union of (odd) cycles, and $x$ is the unique perfect fractional $b$-matching in $G$. The half-integrality of $x$ is then obvious. \end{proof}
\end{proof}

%Should it be noted that we can algorithmically find a half-integer solution?
%When y is a vertex of the optimal face, y should be half-integer.

%Now is a good time to introduce the notion of cover and dominant cover. Also the characterization of the dominant cover is needed :(
Let $S\subseteq T$ be flats in $M$. The sum $\frac{1}{2}(S+T)$ is called a \emph{cover of $(M,L)$} if $a(S+T)_l\geq 2$ for every line $l\in L$. The cover is called \emph{mimimum} if $r(\frac{1}{2}(S+T))=\nu^*(M,L)$. So the minimum covers are the half-integral optimal solutions $y$ with $\supp (y)$ a chain, of the dual program (\ref{dual}) for $w=\one$. 

When $\frac{1}{2}(S+T)$ and $\frac{1}{2}(S'+T')$ are two minimum covers, also $\frac{1}{2}(S\meet S'+T\join T')$ is a minimum cover (see \cite{FMM-duality}). It follows that there is a `canonical' minimum cover $\frac{1}{2}(S^*+T^*)$ with the property that $S^*\subseteq S\subseteq T\subseteq T^*$ for every minimum cover $\frac{1}{2}(S+T)$. This cover is called the \emph{dominant cover} in \cite{FMM-duality}, where a characterization of the dominant cover was given in terms of the maximum size fractional matchings. The \emph{closure} of a fractional matching $x$, denoted by $cl(x)$, is defined to be the smallest flat containing the lines in the support of $x$:
\begin{equation}
\cl(x):=\cl\big(\bigcup\{l\mid x_l>0\}\big).
\end{equation}

\begin{theorem}\label{dominant} \cite{FMM-duality} Let $\frac{1}{2}(S^*+T^*)$ be the dominant cover of $(M,L)$. Then $T^*=\bigcap_x \cl(x)$, where $x$ runs over all  maximum size fractional matchings in $(M,L)$.
\end{theorem}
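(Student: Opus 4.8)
The plan is to prove the two inclusions $T^{*}\subseteq\bigcap_{x}\cl(x)$ and $\bigcap_{x}\cl(x)\subseteq T^{*}$ separately, $x$ ranging over maximum size fractional matchings. The first follows quickly from linear programming duality together with a single uncrossing step; the second is the substantive direction, and I expect it to need the structural description of minimum covers from \cite{FMM-duality}.

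To get $T^{*}\subseteq\cl(x)$, fix a maximum fractional matching $x$ (a primal optimum for $w=\one$) and set $F:=\cl(x)$. Since the minimum covers are exactly the chain-supported half-integer optima of the dual (\ref{dual}) for $w=\one$, the formal sum $y^{*}:=\tfrac{1}{2}S^{*}+\tfrac{1}{2}T^{*}$ is dual-optimal, with positive coefficient on $T^{*}$. Complementary slackness between $x$ and $y^{*}$ gives $a(T^{*})\cdot x=r(T^{*})$. Every line $l$ with $x_{l}>0$ satisfies $l\subseteq F$, hence $l\cap(T^{*}\cap F)=l\cap T^{*}$ and $a(T^{*}\cap F)_{l}=a(T^{*})_{l}$ for all such $l$; therefore $a(T^{*}\cap F)\cdot x=a(T^{*})\cdot x=r(T^{*})$. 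On the other hand $T^{*}\cap F$ is a flat, so feasibility of $x$ forces $a(T^{*}\cap F)\cdot x\le r(T^{*}\cap F)\le r(T^{*})$. Hence $r(T^{*}\cap F)=r(T^{*})$, and since $T^{*}\cap F$ and $T^{*}$ are flats with $T^{*}\cap F\subseteq T^{*}$, we get $T^{*}\cap F=T^{*}$, i.e.\ $T^{*}\subseteq F$. As $x$ was arbitrary, $T^{*}\subseteq\bigcap_{x}\cl(x)$.

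For the reverse inclusion I would set $G:=\bigcap_{x}\cl(x)$ (finitely many of the $\cl(x)$ already intersect to $G$, since the rank of the running intersection drops at each proper step). By the first part $T^{*}\subseteq G$, so it suffices to prove $r(G)\le r(T^{*})$; equivalently, to find for each $e\in E\setminus T^{*}$ a maximum fractional matching whose closure avoids $e$; equivalently again, to produce a single minimum cover whose top flat contains $G$, because the defining property of the dominant cover makes every top flat of a minimum cover lie in $T^{*}$. The route I would take is to build such a matching (or cover) from the dominant cover and the finitely many maximum matchings cutting out $G$: complementary slackness with $\tfrac{1}{2}S^{*}+\tfrac{1}{2}T^{*}$ shows that the support of any maximum matching splits into lines contained in $T^{*}$ and lines meeting $S^{*}$ that lie inside neither $S^{*}$ nor $T^{*}$, and that the portion of the closure lying outside $T^{*}$ is spanned by the second kind of line only; an exchange argument on these lines, together with the uncrossing identity $\tfrac{1}{2}(S\meet S'+T\join T')$ for minimum covers, should let one route around any prescribed $e\notin T^{*}$.

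The hard part will be exactly this construction. As a sanity check, when $S^{*}=\emptyset$ it is immediate: then $a(T^{*})_{l}\ge 2$, hence $l\subseteq T^{*}$, for every line $l$, so $\cl(x)\subseteq T^{*}$ for every matching and $G=T^{*}$. In the general case the difficulty is that contracting $S^{*}$ does not preserve the fractional matching polytope, so the reduction to the case $S^{*}=\emptyset$ is not formal and must be done by hand; this is precisely where the structural results of \cite{FMM-duality} on minimum covers and on the optimal face of (\ref{primal}) are needed. The uncrossing steps and the LP-duality bookkeeping around them should be routine once that structure is in hand.
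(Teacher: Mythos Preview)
The paper does not prove this theorem. Theorem~\ref{dominant} is quoted from \cite{FMM-duality} and used as a black box; there is no argument in the present paper to compare your proposal against.

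As for the proposal itself: your inclusion $T^{*}\subseteq\bigcap_{x}\cl(x)$ is clean and complete. The complementary-slackness step gives $a(T^{*})\cdot x=r(T^{*})$, and the observation that $a(T^{*}\cap F)_{l}=a(T^{*})_{l}$ for every $l\in\supp(x)$ (since such $l$ lie in $F=\cl(x)$) forces $r(T^{*}\cap F)=r(T^{*})$, hence $T^{*}\subseteq F$. That direction is fine.

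The reverse inclusion, however, is only a sketch, and you say as much. Your two proposed reformulations are correct---showing $G\subseteq T^{*}$ amounts either to exhibiting, for each $e\notin T^{*}$, a maximum fractional matching whose closure misses $e$, or to exhibiting a minimum cover whose top flat contains $G$---but you do not actually carry out either construction. The phrases ``an exchange argument on these lines \ldots\ should let one route around any prescribed $e\notin T^{*}$'' and ``should be routine once that structure is in hand'' mark exactly where the work remains. The sanity check $S^{*}=\emptyset$ is correct but, as you note, the general case does not reduce formally to it. So the second half is a plausible plan, not a proof; whether it can be completed without essentially reproducing the machinery of \cite{FMM-duality} is unclear from what you have written.
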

\begin{theorem}\label{closure} \cite{FMM-duality}
If $x$ is a vertex of the fractional matroid matching polytope, then $cl(x)$ is a tight flat with respect to $x$, that is, $|x|=r(cl(x))$ holds.
\end{theorem}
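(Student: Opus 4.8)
The plan is to argue through the linear-algebraic characterization of a vertex. Write $L^+:=\supp(x)$ and let $\mathcal{T}:=\{T\in\mathcal{L}(M)\mid a(T)\cdot x=r(T)\}$ be the (finite, up to coincidence of degree vectors) sublattice of tight flats. Since every line in $L^+$ lies in $\cl(x)$ we have $a(\cl(x))_l=2$ for all $l\in L^+$, so $a(\cl(x))\cdot x=2|x|$, and the claim "$\cl(x)$ is tight", i.e. $a(\cl(x))\cdot x=r(\cl(x))$, is exactly the asserted equality $2|x|=r(\cl(x))$. The elementary fact I would use throughout is: $x$ is a vertex of $P_{M,L}$ iff the only $z\in\R^L$ with $\supp(z)\subseteq L^+$ and $a(T)\cdot z=0$ for all $T\in\mathcal{T}$ is $z=0$ (a nonzero such $z$ gives $x\pm\varepsilon z\in P_{M,L}$ for small $\varepsilon>0$); equivalently the columns $\big(a(T)_l\big)_{T\in\mathcal{T}}$, $l\in L^+$, are linearly independent. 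The whole proof is then an attempt to produce such a nonzero $z$, hence a contradiction, whenever $\cl(x)$ fails to be tight.

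First I would record a fact valid for every $x\in P_{M,L}$, not just vertices: every tight flat $T$ satisfies $T\subseteq\cl(x)$. Indeed, for $l\in L^+$ we have $l\subseteq\cl(x)$, hence $l\cap T=l\cap(T\meet\cl(x))$ and so $a(T)_l=a(T\meet\cl(x))_l$; summing against $x$ gives $r(T)=a(T)\cdot x=a(T\meet\cl(x))\cdot x\le r(T\meet\cl(x))\le r(T)$, forcing $T\meet\cl(x)=T$. Next, replacing $(M,L)$ by $M$ restricted to $\cl(x)$ together with the lines contained in $\cl(x)$ — this only deletes coordinates on which $x$ vanishes and exhibits $P_{M,L}$ (so far as $x$ is concerned) as one of its faces, with the same tight flats by the fact just proved — I may assume $\cl(x)=E$. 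The goal becomes: a vertex $x$ with $\cl(x)=E$ satisfies $2|x|=r(E)$.

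Now let $F^*$ be the join of one representative flat for each degree vector occurring in $\mathcal{T}$; then $F^*\in\mathcal{T}$ and $a(F^*)\ge a(T)$ for all $T\in\mathcal{T}$, so $F^*$ is the tight flat of largest degree vector, and $F^*\subseteq E$ by the previous step. I claim $F^*=E$; since then $a(F^*)_l=2$ for all $l$, this gives $2|x|=a(F^*)\cdot x=r(F^*)=r(E)$, as wanted. Suppose not. As $\cl(x)=E\supsetneq F^*$, the set $\mathcal{D}:=\{l\in L^+\mid l\not\subseteq F^*\}$ of lines poking out of $F^*$ is nonempty, and $a(T)_l\le 1$ for all $T\in\mathcal{T}$, $l\in\mathcal{D}$. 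Three quick reductions follow, each by producing a perturbation: (a) if some $l_0\in\mathcal{D}$ has $l_0\cap F^*=\emptyset$ then $a(T)_{l_0}=0$ for all $T\in\mathcal{T}$ and $z=e_{l_0}$ works; so every $l\in\mathcal{D}$ meets $F^*$, which also forces $r(l)=2$ and makes $p_l:=\cl(l\cap F^*)$ a rank-$1$ flat inside $F^*$, with $a(T)_l=1$ exactly when $p_l\subseteq T$. (b) If $p_{l_0}=p_{l_1}$ for distinct $l_0,l_1\in\mathcal{D}$ those two columns coincide and $z=e_{l_0}-e_{l_1}$ works; so the $p_l$ are distinct. (c) Using half-integrality of the vertex $x$ and the fact that $G_l:=F^*\join l$ has rank $r(F^*)+1$ and is not tight (it would violate maximality of $a(F^*)$ since $a(G_l)_l=2>1$), we get $r(F^*)+x_l\le a(G_l)\cdot x\le r(G_l)-\tfrac12=r(F^*)+\tfrac12$; hence $x_l=\tfrac12$ for every $l\in\mathcal{D}$, the two inequalities are tight, and $a(G_l)|_{L^+}=a(F^*)|_{L^+}+e_l$. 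Substituting these values into $a(F^*)\cdot x=r(F^*)$ yields $2\sum_{l\subseteq F^*}x_l=r(F^*)-\tfrac12|\mathcal{D}|$, so $|\mathcal{D}|$ is even; in particular $|\mathcal{D}|\ge 2$.

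The main obstacle is the remaining task of ruling out $\mathcal{D}\neq\emptyset$ altogether, which I do not expect to follow from a single perturbation: the $L^+$-columns are genuinely independent in all the easy configurations, and one must exploit more of the structure. The picture is that, modulo $F^*$, the lines of $\mathcal{D}$ with $x|_{\mathcal{D}}\equiv\tfrac12$ behave like a vertex of the fractional matching polytope of the contracted instance $(M/F^*,\{l/F^*\mid l\in\mathcal{D}\})$ whose closure is the whole ground set, which suggests an induction on $r(E)$; but contraction does not obviously preserve the vertex property and the coupling between $\mathcal{D}$ and the lines inside $F^*$ must be controlled. The honest route is to invoke the structural description of the vertices of the fractional matroid matching polytope from the earlier analysis: the support decomposes into components, each either a $\{0,1\}$-valued matroid-intersection block or a half-integral "blossom" block, each spanning a flat on which $x$ is already perfect, these flats being in direct sum; then $\cl(x)$ is the direct sum of the component flats and $2|x|=\sum_{C}2|x|_C|=\sum_{C}r(C)=r(\cl(x))$. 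I would spend the bulk of the write-up on that decomposition; everything up to step (c) above is short and self-contained.
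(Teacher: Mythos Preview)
The paper does not prove this theorem; it is quoted from \cite{FMM-duality} without argument. So there is no in-paper proof to compare your proposal against, and I assess it on its own terms.

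Your preliminary reductions are correct. The argument that every tight flat lies in $\cl(x)$ is clean, the restriction to $\cl(x)=E$ is valid (the face $\{y\in P_{M,L}:y_l=0\text{ for }l\not\subseteq\cl(x)\}$ really is $P_{M|\cl(x),L'}$, since for $l\subseteq\cl(x)$ one has $a(T)_l=a(T\meet\cl(x))_l$ and $r(T)\ge r(T\meet\cl(x))$), and your $F^*$ is in fact the unique maximal tight flat, so every tight $T$ satisfies $T\subseteq F^*$; this is what makes the claim ``$a(T)_l=1$ iff $p_l\subseteq T$'' in (a) go through. Reductions (a)--(c) then follow, with (c) legitimately invoking the half-integrality of vertices from \cite{FMM}. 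You also correctly read the intended conclusion as $2|x|=r(\cl(x))$.

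The gap is exactly where you flag it. After (a)--(c) you are left with a nonempty even set $\mathcal{D}$ of half-weight lines with distinct feet $p_l\subseteq F^*$, and you must still produce a nontrivial $z$ annihilated by all tight constraints. You do not do this; instead you defer to a ``structural description of the vertices'' in which the support splits into blocks, each perfect on its spanned flat, with those flats in direct sum. But that statement already \emph{is} the theorem: summing $2|x|_C|=r(C)$ over blocks gives $2|x|=r(\cl(x))$ immediately. So the proposal reduces the theorem to an unproved structure result of at least equal strength. Everything up to (c) is sound scaffolding, but the load-bearing step is missing, and what you would put there is, in effect, a citation of \cite{FMM-duality}---which is precisely what the present paper does.
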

The characterization of $T^*$ in Theorem \ref{dominant} implies that 
\begin{equation}
S^*=\cl(\bigcup_{l\not\subseteq T^*}(T^*\cap l),
\end{equation}
since $S^*\subseteq T^*$ and $\frac{1}{2}(S^*+T^*)$ must cover each line $l\in L$.

In this paper, we use the cardinality algorithm to construct an algorithm that finds, for given weights on the lines, a maximum weight fractional (perfect) matching and an optimal dual solution.

%The algorithm of Vande Vate. Maybe mention here the oracles needed.
\subsection*{Oracles and technical remarks}
Chang et al. \cite{FMM-algorithm} presented an algorithm that given the matroid $M$ and the set of lines $L$, computes the dominant cover and a maximum size (extreme) fractional matching $x$ in time polynomial in $|L|$ and $r(E)$. To accomodate infinite matroids (of finite rank), in particular full linear matroids, we need to say how the matroid and the lines are represented. This is left implicit in \cite{FMM-algorithm}.

The matroid $M$ will be given by a rank-oracle. However, the ground set $E$ and the lines $L$ are not given explicitly. Rather, $L$ is indexed by a finite set and we assume an oracle that takes as input an index representing a line $l$ and a flat $F$ represented by a base of $F$, and outputs a base for $l\cap F$. This will suffice for the algorithmic operations in \cite{FMM-algorithm}.

In this paper, we use the cardinality algorithm as a subroutine to find a maximum weight (perfect) fractional matching and an optimal dual solution.
Our algorithm uses direct sums of minors of the matroid $M$. To make the above oracle available in those matroids, we will assume that our oracle, in addition to a base for $l\cap F$ also returns a base for $l\setminus F$. In particular, every element $e\in E$ that occurs during the algorithm is generated this way, starting from the empty flat. 
\\ \\
In addition to lines of rank $2$, we allow for lines of rank $1$, which greatly simplifies notation and arguments in the contracted matroids. We believe that all results from \cite{FMM-algorithm, FMM-duality} are easily extended to this more general setting. Alternatively, the general case can be reduced to the one allowing only lines of rank $2$, as follows.

Denote by $L_i\subseteq L$ the set of lines of rank $i=1,2$. Make a disjoint copy $M'$ of $M$ and a disjoint copy $L'$ of $L$. We denote for every $l\in L$ its copy by $l'$, and similarly denote for every $S\subseteq E$ the set of copies of its elements by $S'$. Let $N=M\bigoplus M'$ be the direct sum of $M$ and $M'$ and define the following set of rank $2$ lines in $N$: $K:=L_2\cup L'_2\cup \{l\cup l'\mid l\in L_1\}$. 

It is easy to check that every fractional matching $x\in \mathbb{R}^L$ yields a fractional matching $z\in \mathbb{R}^K$ twice the size, given by 
$z_l:=z_l':=x_l$ for every $l\in L_2$ and $z_{l\cup l'}:=2x_l$ for every $l\in L_1$. Conversely, every fractional matching $z\in \mathbb{R}^K$ yields a fractional matching $x\in \mathbb{R}^L$ of (at least) half the size of $z$.

Similarly, every cover $\tfrac{1}{2}(S+T)$ for $L$ gives a cover $\tfrac{1}{2}((S\cup S')+(T\cup T'))$ of double cost for $K$, and every cover for $K$ gives a cover for $L$ of (at most) half the cost. It is easy to see that the dominant cover for $K$ is `symmetric' and obtained from the dominant cover for $L$. 

Hence finding a maximum size (extreme) fractional matching and dominant cover for $L$ can be reduced to finding such for $K$.
%Does this need further explanation?

\section{A matroid operation}
Let $M$ be a matroid on $E$. For a flat $F\subseteq E$, we denote by $M|F$ the restriction of $M$ to $F$ and by $M/F$ the  matroid obtained from $M$ by contracting $F$. 
Given a chain $\mathcal{F}=\{F_1,\dots,F_k\}$ of flats with $F_1\subset F_2\cdots\subset F_k$, we define the matroid $M\contract \mathcal{F}$ on the same ground set $E$ by:
\begin{equation}
M\contract\mathcal{F}:=\bigoplus_{i=0}^k (M|F_{i+1})/F_{i},
\end{equation}
where we set $F_0:=\emptyset$ and $F_{k+1}:=E$.
Observe that when $M$ has no loops (as will always be assumed), also $M\contract \mathcal{F}$ has no loops. The flats of $M\contract \mathcal{F}$ are precisely the sets $S_0\cup\cdots\cup S_k$, with $S_i\subseteq F_{i+1}\setminus F_i$ and $S_i\cup F_i$ a flat of $M$ for every $i=0,\ldots, k$. 

For $X\subseteq E$, we can write $X=X_0\cup\cdots\cup X_k$, where $X_i:=X\cap (F_{i+1}\setminus F_i)$ and $F_0:=\emptyset$ and $F_{k+1}:=E$. Using the submodularity of the rank function, we see that
\begin{eqnarray}\label{rankdecreases}
r_{M\contract \mathcal{F}} (X)&=&\sum_{i=0}^k r_{M\contract\mathcal{F}}(X_i)\nonumber\\
&=&\sum_{i=0}^k (r(X_i\cup F_i)-r(F_i))\nonumber\\
&\leq &\sum_{i=0}^k (r(X\cap F_{i+1})-r(X\cap F_i))\nonumber\\
&=&r(X).
\end{eqnarray}

We denote the \emph{closure} of $X\subseteq E$ in the matroid $M\contract \mathcal{F}$ by $X\contract\mathcal{F}$: 
\begin{equation}
X\contract\mathcal{F}:=\cl_{M\contract \mathcal{F}}(X)=\bigcup_{i=0}^k \left( \cl_M((X\cap F_{i+1})\cup F_i)\setminus F_i \right) .
\end{equation}

%De we need a froof for this? The first thing follows easily by showing that the set of flats are the same.
It is useful to observe that when $\mathcal{F}=\mathcal{F}_1\cup\mathcal{F}_2$, the elements of $\mathcal{F}_2$ are flats in $M\contract\mathcal{F}_1$ and the equality $M\contract\mathcal{F}=(M\contract\mathcal{F}_1)\contract\mathcal{F}_2$ holds. In general, for a subset $X\subseteq E$ we only have the inclusion $X\contract\mathcal{F}\subseteq (X\contract\mathcal{F}_1)\contract\mathcal{F}_2$. 

Since no loops are created when constructing $M\contract \mathcal{F}$ from $M$ it follows from (\ref{rankdecreases}) that every line in $M$ is a line in $M\contract{F}$ as well. We should stress that the degree vector $a(X)$ of $X\subseteq E$ does not depend on the matroid and is the same for $M$ as for $M\contract \mathcal{F}$.

\begin{lemma}\label{contract1}
Let $S=S_0\cup\cdots\cup S_k$ be a flat in $M\contract \mathcal{F}$, with $S_i\subseteq F_{i+1}\setminus F_i$. Then 
\begin{equation}
a(S)=\sum_{i=0}^k \left[a(S_i\cup F_i)-a(F_i)\right].
\end{equation}
\end{lemma}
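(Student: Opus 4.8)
The plan is to prove the identity coordinate by coordinate: fix a line $l\in L$ and show that $a(S)_l=\sum_{i=0}^k[a(S_i\cup F_i)_l-a(F_i)_l]$. Since $a(\cdot)_l$ depends only on the intersection pattern of $l$ with the relevant sets (and, crucially, is the same in $M$ as in $M\contract\mathcal{F}$ — a remark made just above the lemma), I would work with the partition $l=l_0\cup\cdots\cup l_k$ induced by the chain, where $l_i:=l\cap(F_{i+1}\setminus F_i)$. The key observation is that $S_i\cup F_i$ is a flat of $M$ for each $i$, so each term $a(S_i\cup F_i)_l-a(F_i)_l$ is meaningful, and that $F_0\subset F_1\subset\cdots\subset F_k$ is a chain, so the sum $\sum_i[a(F_{i+1})_l-a(F_i)_l]$ telescopes to $a(E)_l-a(\emptyset)_l=2-0=2$ whenever $l\neq\emptyset$ (which is always, since lines are nonempty).

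First I would dispose of the degenerate cases. If $a(S)_l=0$ then $l\cap S=\emptyset$, hence $l_i\cap S_i=\emptyset$ for every $i$, so each bracket equals $a(F_i)_l-a(F_i)_l=0$ — wait, that is not right; one needs $a(S_i\cup F_i)_l=a(F_i)_l$, which holds because $S\cap l=\emptyset$ forces $S_i\cap l=\emptyset$ and hence $(S_i\cup F_i)\cap l=F_i\cap l$. So the right-hand side is $0$ as well. If $a(S)_l=2$, then $l\subseteq S$, so $l\subseteq S_j\cup F_j$ for the largest index $j$ with $l_j\neq\emptyset$ together with all smaller $F_i$-pieces; more carefully, for each $i$, $l\cap F_{i+1}\subseteq S_i\cup F_i$ up to the cumulative structure, giving $a(S_i\cup F_i)_l=a(F_{i+1})_l$ for every $i$, so the sum telescopes to $a(E)_l-a(\emptyset)_l=2$. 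The substantive case is $a(S)_l=1$: here $l\cap S\neq\emptyset$ but $l\not\subseteq S$.

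For $a(S)_l=1$, let $j$ be an index with $l_j\cap S_j\neq\emptyset$ (so $l$ meets $F_{j+1}\setminus F_j$ inside $S$). I would argue that $a(S_i\cup F_i)_l=a(F_{i+1})_l$ for all $i<j$ (because below level $j$ the set $S_i\cup F_i$ already "sees" enough of $l$, or rather one uses that $l\cap F_{j+1}\not\subseteq F_j$ combined with the flat property), that the term at level $j$ contributes the surplus $1$ over $a(F_j)_l$, and that all terms with $i>j$ contribute $0$ because $S_i\cup F_i$ and $F_i$ have the same trace on $l$ once $l$ is "used up". Summing, the $i<j$ part telescopes to $a(F_j)_l-a(\emptyset)_l=a(F_j)_l$, the level-$j$ term adds $1-a(F_j)_l$... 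I should be careful: $a(S_j\cup F_j)_l$ is either $1$ or $2$. If it were $2$ then $l\subseteq S_j\cup F_j\subseteq S$ contradicting $a(S)_l=1$, so it is exactly $1$, and the bracket at level $j$ is $1-a(F_j)_l$. Hence the total is $a(F_j)_l + (1-a(F_j)_l) + 0 = 1 = a(S)_l$, as desired. The main obstacle is the bookkeeping in this last case: one must verify precisely which levels $i$ have $a(S_i\cup F_i)_l=a(F_{i+1})_l$ versus $a(S_i\cup F_i)_l=a(F_i)_l$, using that $S_i\cup F_i$ is a flat and that $l$ meets $S$ exactly once in the contracted sense. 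I expect this is a short but slightly fiddly case analysis, entirely analogous in spirit to the supermodularity proof of the \textbf{Fact} above, and no deep matroid theory is needed beyond the flat-closure descriptions already recorded in the excerpt.
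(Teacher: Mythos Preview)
Your cases $a(S)_l=0$ and $a(S)_l=2$ are fine, but the case $a(S)_l=1$ contains two genuine errors that happen to cancel.

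First, for $i<j$ you assert $a(S_i\cup F_i)_l=a(F_{i+1})_l$. This is false. Since $a(S)_l=1$, the index $j$ with $l\cap S_j\neq\emptyset$ is \emph{unique}: if $l$ met two pieces $S_j,S_{j'}$ one shows (using that $S_{j'}\cup F_{j'}$ is a flat of $M$) that $l\subseteq S_j\cup S_{j'}\subseteq S$, contradicting $a(S)_l=1$. Hence for every $i\neq j$ we have $l\cap S_i=\emptyset$, which gives $(S_i\cup F_i)\cap l=F_i\cap l$ and therefore $a(S_i\cup F_i)_l=a(F_i)_l$, not $a(F_{i+1})_l$. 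So each bracket with $i\neq j$ is simply $0$; there is no telescoping sum equal to $a(F_j)_l$.

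Second, your argument that $a(S_j\cup F_j)_l=1$ is wrong because the implication ``$l\subseteq S_j\cup F_j\Rightarrow l\subseteq S$'' fails: $F_j$ is not contained in $S$ in general. In fact $a(S_j\cup F_j)_l$ may well equal $2$. The correct dichotomy is: if $l\cap F_j=\emptyset$ then $a(F_j)_l=0$ and (since $l\not\subseteq S_j\subseteq S$) $a(S_j\cup F_j)_l=1$; if $l\cap F_j\neq\emptyset$ then $l$ meets both the flat $F_j$ and $S_j\subseteq E\setminus F_j$, so $r(l\cap(S_j\cup F_j))=2$ and hence $l\subseteq S_j\cup F_j$, giving $a(S_j\cup F_j)_l=2$ while $a(F_j)_l=1$. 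Either way the bracket at level $j$ equals $1$, and the total is $0+1+0=1$ as required---but not for the reasons you gave.

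By comparison, the paper avoids the case split on $a(S)_l$ altogether. It first proves the additivity $a(S)_l=\sum_i a(S_i)_l$ directly (using that $l$ meets at most two of the disjoint $S_i$, and is then contained in their union), and then establishes the local identity $a(S_i)_l+a(F_i)_l=a(S_i\cup F_i)_l$ for each $i$ separately, via the same flat-rank trick you would need at level $j$. This decomposition into two short sub-identities is both cleaner and less error-prone than tracking a global telescoping sum.
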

\begin{proof}
For $i=0,\ldots,k$ let $T_i$ be the flat in $M$ defined by $T_i:=S_i\cup F_i$.
Let $l\in L$. Observe that $l$ intersects at most two of the $S_i$ and if $l$ intersects $S_i$ and $S_j$ for some $i\neq j$, then $l\subseteq S_i\cup S_j$. It follows that $a(S)_l=\sum _i a(S_i)_l$.

It now suffices to show that for every $i=0,\ldots,k$ the equality $a(S_i)_l+a(F_i)_l=a(T_i)_l$ holds. Since $S_i$ and $F_i$ are disjoint, the cases where $a(S_i)_l\in \{0,2\}$ or $a(F_i)\in \{0,2\}$ are clear. In the remaining case $l\cap S_i\not=\emptyset$ and $l\cap F_i\not=\emptyset$. Since $F_i$ is a flat in $M$, this implies that $r(l\cap T_i)>1$. This implies $l\subseteq T_i$, since $r(l)\leq 2$ and $T_i$ is a flat in $M$. 
\end{proof}

The following proposition relates fractional matchings in $M$ to those in $M\contract{\mathcal{F}}$.

\begin{proposition}\label{liftx} If $x$ is a fractional matching in $(M\contract \mathcal{F},L)$, then $x$ is a fractional matching in $(M,L)$. If $x$ is a fractional matching in $(M,L)$, and $a(F)x=r_M(F)$ for every $F\in \mathcal{F}$, then $x$ is a fractional matching in $(M\contract\mathcal{F},L)$.
\end{proposition}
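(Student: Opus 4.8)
The plan is to prove the two implications separately. Both come down to a single short estimate once the right flat is chosen: the rank inequality (\ref{rankdecreases}) handles the first direction, and Lemma \ref{contract1} handles the second.

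For the first implication, I would take an arbitrary flat $T$ of $M$ and show $a(T)\cdot x\le r_M(T)$. The point to keep in mind is that $T$ need not be a flat of $M\contract\mathcal{F}$, so I would pass to $T\contract\mathcal{F}=\cl_{M\contract\mathcal{F}}(T)$, which is such a flat. Using that the degree vector is monotone under inclusion (immediate from (\ref{Degreevector})) and that $x\ge 0$, one gets $a(T)\cdot x\le a(T\contract\mathcal{F})\cdot x$; applying the fractional matching inequality for $x$ in $M\contract\mathcal{F}$ gives $a(T\contract\mathcal{F})\cdot x\le r_{M\contract\mathcal{F}}(T\contract\mathcal{F})=r_{M\contract\mathcal{F}}(T)$, and (\ref{rankdecreases}) bounds this last quantity by $r_M(T)$. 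Chaining the three inequalities finishes this direction.

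For the second implication, I would take a flat $S$ of $M\contract\mathcal{F}$ and use the description recorded above: $S=S_0\cup\cdots\cup S_k$ with $S_i\subseteq F_{i+1}\setminus F_i$ and $T_i:=S_i\cup F_i$ a flat of $M$ for every $i$ (with $F_0:=\emptyset$, $F_{k+1}:=E$). Lemma \ref{contract1} then gives the telescoping identity $a(S)=\sum_{i=0}^k\bigl(a(T_i)-a(F_i)\bigr)$, hence $a(S)\cdot x=\sum_{i=0}^k\bigl(a(T_i)\cdot x-a(F_i)\cdot x\bigr)$. I would bound $a(T_i)\cdot x\le r_M(T_i)$ (since $x$ is a fractional matching in $M$) and substitute $a(F_i)\cdot x=r_M(F_i)$ (the hypothesis on $\mathcal{F}$ for $i\ge 1$; both sides vanish for $i=0$), obtaining $a(S)\cdot x\le\sum_{i=0}^k\bigl(r_M(S_i\cup F_i)-r_M(F_i)\bigr)$, which is exactly $r_{M\contract\mathcal{F}}(S)$ by the definition of the rank in the direct sum $\bigoplus_{i=0}^k (M|F_{i+1})/F_i$.

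I do not expect a genuine obstacle. The only subtlety is that in the second implication the tightness hypothesis $a(F)\cdot x=r_M(F)$ for $F\in\mathcal{F}$ is precisely what converts the telescoping bound of Lemma \ref{contract1} into the exact contracted-rank formula: dropping it, one would only reach $a(S)\cdot x\le\sum_{i}\bigl(r_M(T_i)-a(F_i)\cdot x\bigr)$, which need not be at most $r_{M\contract\mathcal{F}}(S)$. In the first implication the corresponding care is simply to close up $T$ inside $M\contract\mathcal{F}$ before invoking the matching inequality there.
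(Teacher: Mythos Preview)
Your proof is correct and follows exactly the same route as the paper: pass to $T\contract\mathcal{F}$ and use the rank inequality (\ref{rankdecreases}) for the first direction, and apply Lemma \ref{contract1} together with the tightness hypothesis to recover $r_{M\contract\mathcal{F}}(S)$ for the second. Your additional remarks on why the tightness assumption is needed are accurate and make the role of the hypothesis more explicit than the paper does.
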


\begin{proof}
To prove the first statement, let $T$ be a flat of $M$. Then 
\begin{equation}
a(T)x\leq a(T\contract\mathcal{F})x\leq r_{M\contract\mathcal{F}}(T\contract\mathcal{F})=r_{M\contract\mathcal{F}}(T)\leq r_M(T),
\end{equation}
where the last inequality follows from (\ref{rankdecreases}).

To prove the second statement, let $S=S_0\cup\cdots\cup S_k$ be a flat in $M\contract \mathcal{F}$. Then 
\begin{eqnarray}
a(S)x&=&\sum_{i=0}^k \left[a(S_i\cup F_i)-a(F_i)\right]x\nonumber\\
&=&\sum_{i=0}^k \left[a(S_i\cup F_i)x-r_M(F_i)\right]\nonumber\\
&\leq&\sum_{i=0}^k \left[r_M(S_i\cup F_i)-r_M(F_i)\right]\nonumber\\
&=&r_{M\contract\mathcal{F}}(S).
\end{eqnarray}
Here the first equality follows from Lemma \ref{contract1} and the second equality is the assumption made in the proposition.
\end{proof}

\section{Algorithm for maximum weight perfect fractional matching}
Let $M$ be a matroid with ground set $E$ and let $L$ be a finite set of lines in $M$. Let $w:L\to \Q_+$ be nonnegative rational weights on the lines. We will assume that $(M,L)$ has a perfect fractional matching. 
Our goal now is to solve the following LP over the perfect fractional matching polytope:
\begin{eqnarray}\label{perfectprimal}
\text{maximize}&&w\cdot x\nonumber\\
\text{subject to}&&x\geq 0\nonumber\\
&&a(T)\cdot x\leq r(T)\quad \text{for all $T\in\mathcal{L}(M), T\ne E$}\nonumber\\
&&a(E)\cdot x= r(E).
\end{eqnarray}
The dual linear program is given over vectors $y\in\R ^{\mathcal{L}(M)}$ of finite support by
\begin{eqnarray}\label{perfectdual}
\text{minimize}&& r(y)\nonumber\\
\text{subject to}&&y(T)\geq 0 \quad \text{for all $T\in\mathcal{L}(M), T\ne E$}\nonumber\\
&&a(y)\geq w.
\end{eqnarray}

Throughout the algorithm we keep a chain $\mathcal{F}=\{F_1,\ldots,F_k\}$ of flats in $M$, with $\emptyset\subset F_1\subset\cdots\subset F_k\subset E$. For convenience we always define $F_0:=\emptyset$ and $F_{k+1}:=E$. We also keep a dual feasible solution $y$ with support $\mathcal{F}$ or $\mathcal{F}\cup\{E\}$. That is $y=\lambda_1F_1+\cdots+\lambda_kF_k+\lambda E$ satisfies $a(y)\geq w$, $\lambda_1,\ldots,\lambda_k\in \Q_{>0}$ and $\lambda\in \Q$.

Observe that for a perfect fractional matching $x$ in $(M,L)$ we have:
\begin{eqnarray}
wx=\sum_{l\in L}w_lx_l&\leq &\sum_{l\in L}a(y)_lx_l\nonumber\\
&=&\sum_{i=1}^k \lambda_i a(F_i)x + \lambda a(E)x\nonumber\\
&\leq &\sum_{i=1}^k \lambda_i r(F_i) +\lambda r(E)=r(y). 
\end{eqnarray}
Here the equality $a(E)x=r(E)$ is used in the last line. It follows that $x$ has maximum weight if and only if
\begin{eqnarray}
w_l=a(y)_l &\text{ for all }& l\in \supp(x),\label{slack1}\\
a(F)x=r(F) &\text{ for all }& F\in \supp(y).\label{slack2}
\end{eqnarray}
Denote $L_y:=\{l\in L\mid w_l=a(y)_l\}$. The above two conditions will be met once we find a perfect fractional matching $x$ in $(M\contract \mathcal{F},L_y)$. Indeed, in that case $x$ is also a perfect fractional matching in $(M,L)$ by Proposition \ref{liftx}. Equalities (\ref{slack1}) will be satisfied by definition of $L_y$, and equalities (\ref{slack2}) will be satisfied because $a(E)x=r(E)$ implies that for every $i$ 
\begin{eqnarray}
r(E)=a(E)x&=& a(F_i)x+a(E\setminus F_i)x\nonumber\\
&\leq&r_{M\contract\mathcal{F}}(F_i)+r_{M\contract\mathcal{F}}(E\setminus F_i)\nonumber\\
&=&r(F_i)+(r(E)-r(F_i))\nonumber\\
&=&r(E),
\end{eqnarray}
and hence $a(F_i)x=r(F_i)$. The description of the algorithm is as follows. 

Initially, we set $\mathcal{F}:=\emptyset$, $y:=\lambda E$, where $\lambda:=\frac{1}{2}\max \{w_l \mid l\in L\}$. The iteration of the algorithm is as follows.

Find a maximum size fractional matching $x$ in $(M\contract \mathcal{F},L_y)$, and find the dominant cover $\frac{1}{2}(S^*+T^*)$, of $(M\contract \mathcal{F},L_y)$. This can be done by the algorithm from \cite{FMM-algorithm}. Now consider two cases.

\textbf{Case 1:} If $x$ is a \emph{perfect} fractional matching, $x$ will be a maximum weight perfect fractional matching in $(M,L)$ and $y$ is an optimal dual solution. We output $x$ and $y$ and stop. 

\textbf{Case 2:} If $x$ is not perfect, write $S^*=S^*_0\cup\cdots \cup S^*_k$ and $T^*=T^*_0\cup\cdots\cup T^*_k$, where $S^*_i,T^*_i\subseteq F_{i+1}\setminus F_i$ for $i=0,\ldots,k$. We define 
\begin{equation}
z:=\sum_{i=0}^k (S^*_i\cup F_i-F_i)+\sum_{i=0}^k (T^*_i\cup F_i-F_i)- E.
\end{equation}
Observe that $a(z)_l=a(S^*+T^*)_l-2\in \{-2,-1,0,1,2\}$ for every $l\in L$. Furthermore, since $\tfrac{1}{2}(S^*+T^*)$ is a cover in $(M\contract\mathcal{F}, L_y)$ we have $a(z)_l\geq 0$ for $l\in L_y$ and $a(z)_l=0$ for $l\in \supp (x)$ by complementary slackness.

Let 
\begin{eqnarray}
\epsilon_1&:=& \max \{t\geq 0 \mid (y+tz)_F\geq 0 \text{ for all $F\not= E$}\},\nonumber\\
\epsilon_2&:=& \max \{t\geq 0 \mid a(y+tz)\geq w\},
\end{eqnarray}
and let $\epsilon$ be the minimum of $\epsilon_1$ and $\epsilon_2$. Let $y':=y+\epsilon z$ and $\mathcal{F}':=\supp(y')\setminus \{E\}$. By definition of $\epsilon$, $y'$ will be a dual feasible solution. We reset $y:=y'$ and $\mathcal{F}:=\mathcal{F}'$ and iterate. 

In Section \ref{RunningTime} we will show that the algorithm terminates in $O(r(E)^3)$ iterations.

\subsection*{Finding a maximum weight fractional matching}
The algorithm for finding a maximum weight \emph{perfect} fractional matching described above, can be used to find a maximum weight fractional matching as follows.

Let the matroid $M$, a set of lines $L$ and weights $w:L\to \Q$ be given. We may assume that $w$ is nonnegative since we can remove lines of negative weight. Let $B$ be a base of $M$. By introducing parallel copies if necessary, we may assume that $\{b\}\not\in L$ for every $b\in B$. We define $L':=L\cup \{\{b\}\mid b\in B\}$ and $w':L'\to \N$ by $w'(l):=w(l)$ for $l\in L$ and $w'(\{b\}):=0$ for $b\in B$.  

Let $x$ be a vertex of the fractional matroid matching polytope for $(M,L)$, and $B'\subset B$ be a base of $M/\cl(x)$. Define $x':L'\to \{0,\frac{1}{2},1\}$ by $x'_l:=x_l$ for $l\in L$, $x'_{\{b\}}:=\frac{1}{2}$ for $b\in B'$ and $x'_{\{b\}}:=0$ for $b\in B\setminus B'$. Then $x'$ is a fractional matching in $(M,L')$ since for any flat $F$ we have
\begin{eqnarray}
a_F\cdot x'&=&a_{F\wedge \cl(x)}\cdot x+|F\cap B'|\nonumber\\
&\leq &r(F\wedge \cl(x))+|F\cap B'|\nonumber\\
&\leq&r(F).
\end{eqnarray}
For $F=E$, both inequalities are satisfied with equality: the first by Theorem \ref{closure} and the second by definition of $B'$. Hence $x'$ is a perfect fractional matching in $(M,L')$. 

It follows that the fractional matchings in $(M,L)$ are precisely the restrictions to $L$, of the perfect fractional matchings in $(M,L')$. Using the above algorithm we find a maximum $w'$-weight perfect fractional matching $x'$ in $(M,L')$ and an optimal dual solution $y'$. Restricting $x'$ to $L$ gives the required maximum $w$-weight fractional matching in $(M,L)$. Since $a(y')_{\{b\}}\geq w'(\{b\})=0$ for every $b\in B$, it follows from the fact that $\supp (y')$ is a chain, that $y'_E\geq 0$ and hence is an optimal dual solution for the problem of finding a maximum weight fractional matching in $(M,L)$.

\section{Running time of the algorithm}\label{RunningTime}
In this section, we will show that the algorithm for maximum weight perfect fractional matching described in the previous section terminates after $O(r^3)$ iterations. To this end, we will define a parameter $\psi(\mathcal{F}, S^*,T^*)$ that measures the progress made.

Let $\mathcal{F}=\{F_1, \ldots, F_k\}$ be a chain of flats in $M$, where $\emptyset=:F_0\subset F_1\subset\cdots\subset F_k\subset F_{k+1}:=E$. For a subset $X=X_0\cup\cdots\cup X_k$ of $E$ where $X_i\subseteq F_{i+1}\setminus F_i$ ($i=0,\ldots,k$), we define
\begin{equation}
\phi(\mathcal{F},X):=\sum_{i=0}^k \left[r(F_i\cup X_i)^2-r(F_i)^2\right].
\end{equation}

Observe that $\phi(\mathcal{F},X)$ is a nonnegative integer no larger than $r(E)^2$. Clearly, for $X\subseteq X'$ we have $\phi(\mathcal{F},X)\leq \phi(\mathcal{F},X')$ and $\phi(\mathcal{F},X)=\phi(\mathcal{F},X\contract\mathcal{F})$.

For flats $S$ and $T$ of $M$, we define
\begin{equation}
\psi(\mathcal{F},S,T):=\phi(\mathcal{F},S)+\phi(\mathcal{F},T)+2r(E)r_{M\contract \mathcal{F}}(T).
\end{equation}
So $\psi(\mathcal{F},S,T)$ is a nonnegative integer no larger than $4r(E)^2$.
The following two lemmas will display some useful properties of $\phi$ (and hence of $\psi$).

\begin{lemma}\label{psi1}
Let $X\subseteq X'$ be subsets of $E$. Then 
\begin{equation}
\phi(\mathcal{F},X')-\phi(\mathcal{F},X)\leq (r_{M\contract\mathcal{F}}(X')-r_{M\contract\mathcal{F}}(X))\cdot 2r(E),
\end{equation}
and equality holds if and only if $r_{M\contract\mathcal{F}}(X')=r_{M\contract\mathcal{F}}(X)$.
\end{lemma}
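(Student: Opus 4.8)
The plan is to work block by block along the chain and reduce the statement to an elementary one-variable estimate. I would start by decomposing $X=X_0\cup\cdots\cup X_k$ and $X'=X_0'\cup\cdots\cup X_k'$ with $X_i:=X\cap(F_{i+1}\setminus F_i)$ and $X_i':=X'\cap(F_{i+1}\setminus F_i)$, so that $X_i\subseteq X_i'$ for each $i$. Setting $p_i:=r(F_i)$, $q_i:=r(F_i\cup X_i)$ and $q_i':=r(F_i\cup X_i')$, monotonicity of the rank function gives $0\le p_i\le q_i\le q_i'\le r(E)$. By the definition of $\phi$ we have $\phi(\mathcal{F},X)=\sum_{i=0}^k(q_i^2-p_i^2)$ and $\phi(\mathcal{F},X')=\sum_{i=0}^k(q_i'^2-p_i^2)$, while the rank computation (\ref{rankdecreases}) gives $r_{M\contract\mathcal{F}}(X)=\sum_{i=0}^k(q_i-p_i)$ and $r_{M\contract\mathcal{F}}(X')=\sum_{i=0}^k(q_i'-p_i)$.

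Next I would prove the inequality by telescoping and factoring:
\[
\phi(\mathcal{F},X')-\phi(\mathcal{F},X)=\sum_{i=0}^k\big(q_i'^2-q_i^2\big)=\sum_{i=0}^k(q_i'-q_i)(q_i'+q_i).
\]
For each $i$ the factor $q_i'-q_i$ is nonnegative and $q_i'+q_i\le 2r(E)$, so each summand is at most $(q_i'-q_i)\cdot 2r(E)$; summing and recognizing that $\sum_{i=0}^k(q_i'-q_i)=r_{M\contract\mathcal{F}}(X')-r_{M\contract\mathcal{F}}(X)$ yields exactly the claimed bound.

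Finally, for the equality case: equality in the estimate above holds if and only if for every $i$ either $q_i'=q_i$ or $q_i'+q_i=2r(E)$. The one point that needs care is that the second alternative is not genuinely separate --- since $q_i,q_i'\le r(E)$, the equality $q_i'+q_i=2r(E)$ forces $q_i=q_i'=r(E)$, hence again $q_i'=q_i$. So equality holds if and only if $q_i'=q_i$ for all $i$, and because each term $q_i'-q_i$ is nonnegative this is equivalent to $\sum_{i=0}^k(q_i'-q_i)=0$, i.e.\ to $r_{M\contract\mathcal{F}}(X')=r_{M\contract\mathcal{F}}(X)$. I do not expect a real obstacle in this argument; the only subtlety is this collapsing of the two equality alternatives into one, which is what makes both directions of the ``if and only if'' go through at once.
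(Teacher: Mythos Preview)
Your argument is correct and essentially identical to the paper's proof: the same block-wise decomposition, the same difference-of-squares factoring, and the same bound $q_i'+q_i\le 2r(E)$. Your treatment of the equality case is in fact a touch more careful than the paper's, which simply asserts that equality holds iff $r(F_i\cup X_i')=r(F_i\cup X_i)$ for every $i$ without spelling out why the alternative $q_i'+q_i=2r(E)$ collapses.
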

\begin{proof}
Write $X=X_0\cup\cdots\cup X_k$ and $X'=X'_0\cup\cdots\cup X'_k$ where $X_i\subseteq X'_i\subseteq F_{i+1}\setminus F_i$. We have
\begin{eqnarray}
\phi(\mathcal{F},X')-\phi(\mathcal{F},X)&=&\sum_{i=0}^k (r(F_i\cup X'_i)^2-r(F_i\cup X_i)^2)\nonumber\\
&=&\sum_{i=0}^k (r(F_i\cup X'_i)-r(F_i\cup X_i))(r(F_i\cup X'_i)+r(F_i\cup X_i))\nonumber\\
&\leq&\sum_{i=0}^k (r(F_i\cup X'_i)-r(F_i\cup X_i))\cdot 2r(E)\nonumber\\
&=&(r_{M\contract\mathcal{F}}(X')-r_{M\contract\mathcal{F}}(X))\cdot 2r(E).
\end{eqnarray}
Equality holds if and only if for every $i$ we have $r(F_i\cup X'_i)=r(F_i\cup X_i)$.
\end{proof}

\begin{lemma}\label{psi2}
Let $X=X_0\cup\cdots\cup X_k$ with $X_i\subseteq F_{i+1}\setminus F_i$, and let $\mathcal{F}'\supset \mathcal{F}$ be a longer chain of flats in $M$. Suppose that $r_{M\contract \mathcal{F}}(X)=r_{M\contract\mathcal{F'}}(X)$. Then $\phi(\mathcal{F},X)\leq \phi(\mathcal{F'},X)$, and equality holds if and only if $\mathcal{F'}\cup\{F_i\join X_i, i=0,\ldots, k\}$ is a chain.
\end{lemma}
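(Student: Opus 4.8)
I would reduce to the case of a single insertion and then telescope. Since $\mathcal{F}'\setminus\mathcal{F}$ is a subset of the chain $\mathcal{F}'$, it is totally ordered; write $\mathcal{F}'\setminus\mathcal{F}=\{G_1,\dots,G_m\}$ with $G_1\subsetneq\cdots\subsetneq G_m$ and put $\mathcal{F}^{(t)}:=\mathcal{F}\cup\{G_1,\dots,G_t\}$. Using $M\contract\mathcal{F}^{(t)}=(M\contract\mathcal{F}^{(t-1)})\contract\{G_t\}$ and (\ref{rankdecreases}), the sequence $r_{M\contract\mathcal{F}^{(t)}}(X)$ is non-increasing in $t$, and since its endpoints agree by hypothesis it is constant; the same argument gives $r_{M\contract\mathcal{F}}(X)=r_{M\contract(\mathcal{F}\cup\{G\})}(X)$ for each single $G\in\mathcal{F}'\setminus\mathcal{F}$. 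So it suffices to prove the \emph{one-step statement}: if $\mathcal{F}''=\mathcal{F}\cup\{G\}$ with $F_j\subset G\subset F_{j+1}$ and $r_{M\contract\mathcal{F}}(X)=r_{M\contract\mathcal{F}''}(X)$, then $\phi(\mathcal{F},X)\le\phi(\mathcal{F}'',X)$, with equality iff $G$ is comparable with $F_j\join X_j$. (Note $\mathcal{F}\cup\{F_i\join X_i:i\}$ is always a chain, since $F_i\subseteq F_i\join X_i\subseteq F_{i+1}$; and for $i\ne j$ one has $F_i\join X_i\subseteq F_j$ or $F_i\join X_i\supseteq F_{j+1}$, so $F_i\join X_i$ is automatically comparable with $G$. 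Hence comparability of $G$ with $F_j\join X_j$ is exactly what makes $\mathcal{F}''\cup\{F_i\join X_i:i\}$ a chain.)

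To prove the one-step statement I would set $Y:=X\cap(G\setminus F_j)$, $Z:=X\cap(F_{j+1}\setminus G)$ — so $X_j$ is the disjoint union of $Y$ and $Z$ — and abbreviate $p:=r(F_j\cup Y)$, $q:=r(G\cup Z)$, $b:=r(G)$, $s:=r(F_j\cup X_j)$. Only the $j$-th block changes (splitting into $[F_j,G]$ and $[G,F_{j+1}]$), and a direct computation gives
\[
\phi(\mathcal{F}'',X)-\phi(\mathcal{F},X)=p^2+q^2-b^2-s^2,\qquad r_{M\contract\mathcal{F}''}(X)-r_{M\contract\mathcal{F}}(X)=p+q-b-s ,
\]
so the rank hypothesis reads $p+q=b+s$. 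From $F_j\cup Y\subseteq G\subseteq G\cup Z$ and $F_j\cup Y\subseteq F_j\cup X_j\subseteq G\cup Z$ we get $p\le b\le q$ and $p\le s\le q$, so $b$ and $s$ lie in $[p,q]$ with sum $p+q$; since $(q-b)(b-p)+(q-s)(s-p)$ equals $p^2+q^2-b^2-s^2$ whenever $b+s=p+q$, this difference is a sum of two nonnegative products, which proves $\phi(\mathcal{F},X)\le\phi(\mathcal{F}'',X)$ and shows equality holds iff $\{b,s\}=\{p,q\}$. To express this in terms of flats, submodularity of $r$ applied to the flats $F_j\join X_j$ and $G$ — whose join is $\cl(G\cup Z)$ and whose meet is $(F_j\join X_j)\cap G\supseteq F_j\join Y$ — gives $s+b\ge q+p$, which is tight here, hence $(F_j\join X_j)\cap G=F_j\join Y$. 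Then $\{b,s\}=\{p,q\}$ means either $b=p$, whence $G=\cl(F_j\cup Y)\subseteq F_j\join X_j$, or $b=q,\ s=p$, which forces $Z\subseteq\cl(F_j\cup Y)\subseteq G$ and hence $Z=\emptyset$ (as $Z\cap G=\emptyset$) and $F_j\join X_j=F_j\join Y\subseteq G$ — either way $G$ is comparable with $F_j\join X_j$; conversely, comparability of $G$ with $F_j\join X_j$ together with $(F_j\join X_j)\cap G=F_j\join Y$ forces $G=F_j\join Y$ (so $b=p$, $s=q$) or $F_j\join X_j=F_j\join Y$ (so $s=p$, $q=b$), i.e.\ $\{b,s\}=\{p,q\}$. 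This settles the one-step statement.

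Telescoping the one-step inequality along $\mathcal{F}^{(0)}\subset\cdots\subset\mathcal{F}^{(m)}$ gives $\phi(\mathcal{F},X)\le\phi(\mathcal{F}',X)$. For the ``only if'' half of the equality characterization: if $\phi(\mathcal{F},X)=\phi(\mathcal{F}',X)$, then for any single $G\in\mathcal{F}'\setminus\mathcal{F}$ one has $\phi(\mathcal{F},X)\le\phi(\mathcal{F}\cup\{G\},X)\le\phi(\mathcal{F}',X)$ (the second inequality being the already-established monotonicity for the refinement $\mathcal{F}\cup\{G\}\subseteq\mathcal{F}'$), forcing $\phi(\mathcal{F}\cup\{G\},X)=\phi(\mathcal{F},X)$, so the one-step statement makes $G$ comparable with $F_j\join X_j$ for its $\mathcal{F}$-block $j$; hence $\mathcal{F}'\cup\{F_i\join X_i:i\}$ is a chain.

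The ``if'' half is the step I expect to be the main obstacle, because the block-closures appearing in the one-step criterion change as the chain is refined, so one cannot literally apply that criterion with the fixed flats $F_i\join X_i$. I would handle it by inserting the $G_t$ in \emph{increasing} order. When $G_t$ is added to $\mathcal{F}^{(t-1)}$, all previously inserted flats lie strictly below $G_t$, so the upper end of $G_t$'s block in $\mathcal{F}^{(t-1)}$ is still $F_{j+1}$ (where $F_j\subset G_t\subset F_{j+1}$ is the $\mathcal{F}$-position of $G_t$), while its lower end $H^-$ satisfies $F_j\subseteq H^-\subsetneq G_t$. Setting $W:=X\cap(F_{j+1}\setminus H^-)$, the block-closure relevant to the one-step criterion at this step is $H^-\join W$; from $W\subseteq X_j$ and $F_j\cup X_j\subseteq H^-\cup W$ we get $F_j\join X_j\subseteq H^-\join W$, and if $G_t\supseteq F_j\join X_j$ then $G_t\supseteq H^-\cup W$ (since $G_t\supseteq H^-$ and $G_t\supseteq X_j\supseteq W$), hence $G_t\supseteq H^-\join W$. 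Therefore comparability of $G_t$ with $F_j\join X_j$, which holds because $\mathcal{F}'\cup\{F_i\join X_i:i\}$ is a chain, entails comparability of $G_t$ with $H^-\join W$, so the one-step statement yields $\phi(\mathcal{F}^{(t-1)},X)=\phi(\mathcal{F}^{(t)},X)$; telescoping over $t$ gives $\phi(\mathcal{F},X)=\phi(\mathcal{F}',X)$, completing the proof.
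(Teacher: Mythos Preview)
Your proof is correct, but it proceeds quite differently from the paper's. The paper works block by block: for each $i$ it lists the flats of $\mathcal{F}'$ lying between $F_i$ and $F_{i+1}$ as $H_0\subset\cdots\subset H_{l+1}$, sets $r_j:=r(H_j\cup Y_j)-r(H_j)$, uses the rank hypothesis to write $r(F_i\cup X_i)-r(F_i)=\sum_j r_j$, and then telescopes $r(F_i\cup X_i)^2-r(F_i)^2$ as a sum of differences of squares; the inequality follows from the monotonicity of $a\mapsto (a+c)^2-a^2$ together with $r(H_j)\ge r(F_i)+r_0+\cdots+r_{j-1}$, and the equality condition drops out directly as ``either $r_j=0$ or $r(H_j)=r(F_i)+r_0+\cdots+r_{j-1}$'', which is precisely the chain condition on $F_i\join X_i$.

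You instead reduce to a single insertion, establish the quadratic identity $p^2+q^2-b^2-s^2=(q-b)(b-p)+(q-s)(s-p)$ under the constraint $b+s=p+q$, and then telescope over insertions. The inequality portion is equally clean this way. The cost shows up in the equality analysis: since the block-closures $H^-\join W$ relevant to the one-step criterion drift as you refine, you need the ordering trick (insert the $G_t$ in increasing order so the upper endpoint of the relevant block stays at $F_{j+1}$) together with the sandwich $F_j\join X_j\subseteq H^-\join W$ and $G_t\supseteq H^-$ to transfer comparability with $F_j\join X_j$ to comparability with $H^-\join W$. This works, and the ``only if'' direction via the sandwich $\phi(\mathcal{F},X)\le\phi(\mathcal{F}\cup\{G\},X)\le\phi(\mathcal{F}',X)$ is a nice way to avoid tracking the intermediate block-closures there. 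One small omission worth making explicit: in the ``if'' paragraph you spell out only the case $G_t\supseteq F_j\join X_j$; the case $G_t\subseteq F_j\join X_j$ follows immediately from the inclusion $F_j\join X_j\subseteq H^-\join W$ you already recorded, and it would help the reader to say so. Overall your argument is more modular while the paper's is more compact; both yield the same equality characterization.
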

\begin{proof}
Consider a fixed $i\in \{0,\ldots, k\}$. Let $H_0\subset\cdots\subset H_{l+1}$ be the flats $\{F\in \mathcal{F'}\mid F_{i}\subseteq F\subseteq F_{i+1}\}$. Write $X_i:=Y_0\cup\cdots\cup Y_l$ where $Y_j\subseteq H_{j+1}\setminus H_j$ for $j=0,\ldots,l$. Denote $r:=r(F_i\cup X_i)-r(F_i)$ and $r_j:=r(H_j\cup Y_j)-r(H_j)\leq r(H_{j+1})-r(H_j)$ for $j=0,\ldots,l$.

The assumpion $r_{M\contract \mathcal{F}}(X)=r_{M\contract \mathcal{F'}}(X)$ means that $r=r_0+\cdots+r_l$. We have the following inequalities.
\begin{eqnarray}
r(F_i\cup X_i)^2-r(F_i)^2&=&(r(F_i)+r)^2-r(F_i)^2\nonumber\\
&=& \sum_{j=0}^l \left[ (r(F_i)+r_0+\cdots+r_{j-1}+r_j)^2-(r(F_i)+r_0+\cdots+r_{j-1})^2\right]\nonumber\\
&\leq &\sum_{j=0}^l \left[ (r(H_j)+r_j)^2-r(H_j)^2\right]\nonumber\\
&=&\sum_{j=0}^l (r(H_j\cup Y_j)^2-r(H_j)^2).
\end{eqnarray}
The inequality follows from the fact that $r(H_j)\geq r(F_i)+r_0+\cdots+r_{j-1}$ for $j=0,\ldots,l$, and equality holds if and only if for every $j$ either $r(H_j)=r(F_i)+r_0+\cdots+r_{j-1}$ or $r_j=0$. That is, if $j$ is the largest index for which $r_j>0$, then we have equality if and only if $H_j\subseteq F_i\join X_i\subseteq H_{j+1}$. 
Summing over all $i$ proves the lemma.
\end{proof}

In the remainder of this section, we shall prove that the algorithm described in the previous section is correct, and that the number of iterations is at most $O(r(E)^3)$. 

\begin{proof}[Proof of the running time bound]
We will show that the pair of nonnegative integers $(r(E)-2\nu^*(M\contract\mathcal{F},L_y),\ \psi(\mathcal{F},S^*,T^*))$ decreases lexicographically in each iteration. 

Consider an iteration. Let $y$ be the current dual solution, $\mathcal{F}$ the associated chain of flats in $M$ and let $\frac{1}{2}(S^*+T^*)$ be the dominant cover of $(M\contract \mathcal{F}, L_y)$. Denote by $y'$, $\mathcal{F}'$ and $\frac{1}{2}({S^*}'+{T^*}')$ the corresponding objects in the next iteration.

We define the subset $\overline{L}$ of lines by   
\begin{eqnarray}
\overline{L}&:=&\{l\in L_y\mid l\in \supp(x)\nonumber\\
&&\text{ for some maximum size fractional matching $x$ in $(M\contract\mathcal{F},L_y)$ }\},
\end{eqnarray}
and the chain $\overline{\mathcal{F}}$ of flats in $M$ by
\begin{equation}
\overline{\mathcal{F}}:=\mathcal{F}\cup \{S^*_i\cup F_i, i=0,\ldots, k\} \cup \{T^*_i\cup F_i, i=0,\ldots, k\},
\end{equation}
where again we denote $S^*_i:=S^*\cap (F_{i+1}\setminus F_i)$ and $T^*_i:=T^*\cap (F_{i+1}\setminus F_i)$ for $i=0,\ldots, k$. Note that $\overline{\mathcal{F}}=\mathcal{F}\cup\mathcal{F}'$, and hence $M\contract \overline{\mathcal{F}}=(M\contract \mathcal{F})\contract \mathcal{F}'=(M\contract\mathcal{F}')\contract \mathcal{F}$.

By complementary slackness, $a(\frac{1}{2}(S^*+T^*))_l=2$ holds for every $l\in \overline{L}$. Therefore we have $a(z)_l=0$ for every $l\in \overline{L}$ so that 
\begin{equation}
\overline{L}\subseteq L_{y'}.
\end{equation}

Again by complementary slackness, $a(S^*)x=r_{M\contract\mathcal{F}}(S^*)$ and $a(T^*)x=r_{M\contract\mathcal{F}}(T^*)$ holds for any maximum size fractional matching $x$ in $(M\contract\mathcal{F}, L_y)$. Hence the flats $F_i\cup S_i^*$ and $F_i\cup T_i^*$ are tight for $x$, which implies by Proposition \ref{liftx} that 
$x$ is also a fractional matching in $(M\contract \overline{\mathcal{F}},\overline{L})$ and hence in $(M\contract \mathcal{F}',L_{y'})$. We can conclude that  
\begin{equation}\label{nuincrease}
\nu^*(M\contract \mathcal{F},L_y)=\nu^*(M\contract \overline{\mathcal{F}},\overline{L})\leq \nu^*(M\contract \mathcal{F'},L_{y'}).
\end{equation}
When strict inequality holds in (\ref{nuincrease}), we are done. Therefore, in the remainder of the proof, we may assume that 
\begin{equation}
\nu^*(M\contract \mathcal{F},L_y)=\nu^*(M\contract \mathcal{F'},L_{y'}).
\end{equation}
Denote 
\begin{equation}
\overline{S}:={S^*}'\contract \overline{\mathcal{F}},\quad \overline{T}:={T^*}'\contract \overline{\mathcal{F}}.
\end{equation}

Clearly $\tfrac{1}{2}(\overline{S}+\overline{T})$ is a cover in $(M\contract\overline{\mathcal{F}},\overline{L})$, since $\overline{L}\subseteq L_{y'}$. Since $\nu^*(M\contract\overline{\mathcal{F}},\overline{L})=\nu^*(M\contract\mathcal{F}',L_{y'})$ and 
\begin{equation}
r_{M\contract{\overline{\mathcal{F}}}}(\overline{S}+\overline{T})=r_{M\contract{\overline{\mathcal{F}}}}({S^*}'+{T^*}')\leq r_{M\contract \mathcal{F}'}({S^*}'+{T^*}'),
\end{equation}
we find that $\frac{1}{2}(\overline{S}+\overline{T})$ is a minimum cover in $(M\contract\overline{\mathcal{F}},\overline{L})$. This implies that $r_{M\contract{\overline{\mathcal{F}}}}({S^*}')= r_{M\contract \mathcal{F}'}({S^*}')$ and $r_{M\contract{\overline{\mathcal{F}}}}({T^*}')= r_{M\contract \mathcal{F}'}({T^*}')$.

Clearly, $\frac{1}{2}(S^*+T^*)$ is also a minimum cover of $(M\contract\overline{\mathcal{F}},\overline{L})$. Furthermore, since $x$ is a maximum size fractional matching in $(M\contract \mathcal{F},L_y)$ if and only if $x$ is a maximum size fractional matching in $(M\contract \overline{\mathcal{F}},\overline{L})$, it follows that $\frac{1}{2}(S^*+T^*)$ is in fact the dominant cover of $(M\contract\overline{\mathcal{F}},\overline{L})$. Indeed, let $\frac{1}{2}(S+T)$, $S\subseteq T$ be the dominant cover of $(M\contract\overline{\mathcal{F}},\overline{L})$. It suffices to show that $T^*=T$. For every maximum size fractional matching $x$ in $(M\contract\mathcal{F},L_y)$, we have $\cl_{M\contract\mathcal{F}}(x)\supseteq T^*$. This implies that $\cl_{M\contract\mathcal{F}}(x)$ is also a flat in $M\contract\overline{\mathcal{F}}$, and hence $\cl_{M\contract\mathcal{F}}(x)=\cl_{M\contract\overline{\mathcal{F}}}(x)$. By the characterization of the dominant cover, Theorem \ref{dominant}, it now follows that
\begin{equation}
T=\bigcap_x \cl_{M\contract\overline{\mathcal{F}}}(x)=\bigcap_x\cl_{M\contract\mathcal{F}}(x)=T^*,
\end{equation} 
where the intersection runs over all maximum size fractional matchings $x$ in $(M\contract\mathcal{F},L_y)$.

By comparing the two covers of $(M\contract \overline{\mathcal{F}},\overline{L})$, we obtain 
\begin{equation}\label{comparecover}
S^*\subseteq \overline{S},\quad \overline{T}\subseteq T^*.
\end{equation} 

We have the following series of inequalities:
\begin{eqnarray}\label{comparepsi}
\psi(\mathcal{F'},{S^*}',{T^*}')&=&\phi(\mathcal{F}',{S^*}')+\phi(\mathcal{F}',{T^*}')+2r(E)r_{M\contract{\mathcal{F}'}}({T^*}')\nonumber\\
&\leq&\phi(\overline{\mathcal{F}},\overline{S})+\phi(\overline{\mathcal{F}},\overline{T})+2r(E)r_{M\contract\overline{\mathcal{F}}}(\overline{T})\label{ineqpart1}\\
&\leq & \phi(\overline{\mathcal{F}},S^*)+\phi(\overline{\mathcal{F}},T^*)+2r(E)r_{M\contract\overline{\mathcal{F}}}(\overline{T}+\overline{S}-S^*)\label{ineqpart2}\\
&=&\phi(\overline{\mathcal{F}},S^*)+\phi(\overline{\mathcal{F}},T^*)+2r(E)r_{M\contract\overline{\mathcal{F}}}(T^*)\label{eqpart1}\\
&=&\phi(\mathcal{F},S^*)+\phi(\mathcal{F},T^*)+2r(E)r_{M\contract\mathcal{F}}(T^*)\label{eqpart2}\\
&=&\psi(\mathcal{F},S^*,T^*).\nonumber
\end{eqnarray}
Here (\ref{ineqpart1}) follows from Lemma \ref{psi2} since $r_{M\contract \mathcal{F}'}({S^*}')= r_{M\contract \overline{\mathcal{F}}}({S^*}')$ and $r_{M\contract \mathcal{F}'}({T^*}')= r_{M\contract \overline{\mathcal{F}}}({T^*}')$. The second inequality (\ref{ineqpart2}) follows from Lemma \ref{psi1} applied to $S^*\subseteq \overline{S}$ and the fact that $T^*\supseteq \overline{T}$. Equality in (\ref{eqpart1}) follows since $r(S^*+T^*)=r(\overline{S}+\overline{T})$. Finally, equality in (\ref{eqpart2}) follows by Lemma \ref{psi2} since $r_{M\contract \mathcal{F}}(S^*)= r_{M\contract \overline{\mathcal{F}}}(S^*)$ and $r_{M\contract \mathcal{F}}(T^*)= r_{M\contract \overline{\mathcal{F}}}(T^*)$. 

We need to show that $\psi(\mathcal{F'},{S^*}',{T^*}')<\psi(\mathcal{F},S^*,T^*)$ holds. Suppose that equality holds in (\ref{ineqpart2}). Then we must have equality in (\ref{comparecover}), by Lemma \ref{psi1}. This means that we are in the case $\epsilon_1<\epsilon_2$. Indeed, if $\epsilon_1\geq \epsilon_2$, there exists an $l\in L_{y'}\setminus L_y$ with $a(S^*+T^*)_l<2$. However, $a(\overline{S}+\overline{T})_l=2$, which would imply that we do not have equality in (\ref{comparecover}).

Since $\epsilon_1<\epsilon_2$, there is an $F_i\in \mathcal{F}\setminus \mathcal{F'}$.  Since $z_{F_i}<0$, we have either ($S^*_i\not=\emptyset$ and $S^*_{i-1}\not\supseteq F_i\setminus F_{i-1}$) or ($T^*_i\not=\emptyset$ and $T^*_{i-1}\not\supseteq F_i\setminus F_{i-1}$). 

Now let $\mathcal{F}'=\{F'_1,F'_2,\cdots ,F'_m\}$, and choose $j$ such that $F'_j\subsetneq F_i\subsetneq F'_{j+1}$.

First, assume that $S^*_i\not=\emptyset$ and $S^*_{i-1}\not\supseteq F_i\setminus F_{i-1}$. Then we apply Lemma \ref{psi2} with respect to $\mathcal{F}'$ and $\overline{\mathcal{F}}$, and $X:=S^*=\overline{S}$. Note that $F'_j\join X_j=F'_j\join \overline{S}_i$. Since $\{F'_j\join \overline{S}_i\}\cup \overline{\mathcal{F}}$ is not a chain, we get that $\phi (\mathcal{F}',{S^*}')<\phi (\overline{\mathcal{F}},\overline{S})$, and thus, $\psi(\mathcal{F'},{S^*}',{T^*}')<\psi(\mathcal{F},S^*,T^*)$. 

Second, assume that $T^*_i\not=\emptyset$ and $T^*_{i-1}\not\supseteq F_i\setminus F_{i-1}$. Here, if $S^*_i\neq\emptyset$, then the first case applies, thus we may also assume that $S^*_i=\emptyset$. Apply Lemma \ref{psi2} with respect to $\mathcal{F}'$ and $\overline{\mathcal{F}}$, and $X:=T^*=\overline{T}$. Note that $F'_j\join X_j=F'_j\join \overline{T}_i$. Since $\{F'_j\join \overline{T}_i\}\cup \overline{\mathcal{F}}$ is not a chain, we get that $\phi (\mathcal{F}',{T^*}')<\phi (\overline{\mathcal{F}},\overline{T})$, and thus, $\psi(\mathcal{F'},{S^*}',{T^*}')<\psi(\mathcal{F},S^*,T^*)$. This completes the proof.

\end{proof}

%\section*{Applications}
%Given a weighted matroid matching problem for a linear matroid, we can find a 2/3 approximation by %taking the full linear matroid and rounding an optimal fractional solution.

\section*{Acknowledgements}
We thank the anonymous referee for the very useful comments. The first author would like to thank Rudi Pendavingh for stimulating discussions on a possible weighted version of the fractional matching algorithm for matroids.

\end{document}